\definecolor{gray}{gray}{0.7}
\definecolor{Gray}{gray}{0.3}
\def\Z{{\mathbb{Z}}}
\def\Q{{\mathbb{Q}}}
\def\C{{\mathbb{C}}}
\DeclareMathOperator{\Hess}{Hess}
\DeclareMathOperator{\id}{id}
\numberwithin{equation}{section}
\theoremstyle{break}
 \newtheorem{theorem}{Theorem}[section]
 \newtheorem{prop}[theorem]{Proposition}
 \newtheorem{lemma}[theorem]{Lemma}
 \theoremstyle{definition}
 \newtheorem{remark}[theorem]{Remark}
 \newtheorem{example}[theorem]{Example}
\title[Hessenberg varieties and Schubert polynomials]{The cohomology rings of regular nilpotent Hessenberg varieties and Schubert polynomials}
\author{Tatsuya Horiguchi}
\address{Department of Pure and Applied Mathematics,
Graduate School of Information Science and Technology,
Osaka University,
Suita, Osaka, 565-0871, Japan / 
Osaka City University Advanced Mathematical Institute, 3-3-138 Sugimoto, Sumiyoshi-ku, Osaka 558-8585, Japan}
\email{tatsuya.horiguchi0103@gmail.com}
\date{\today} 
\begin{document}

\maketitle

\begin{abstract}
In this paper we study a relation between the cohomology ring of a regular nilpotent Hessenberg variety and Schubert polynomials.
To describe an explicit presentation of the cohomology ring of a regular nilpotent Hessenberg variety, polynomials $f_{i,j}$ were introduced by Abe-Harada-Horiguchi-Masuda.
We show that every polynomial $f_{i,j}$ is an alternating sum of certain Schubert polynomials.
\end{abstract}

\bigskip

\section{Introduction}
 
Let $n$ be a positive integer.
The {\bf (full) flag variety} $\mathcal{F}\ell(\C^n)$ in $\C^n$ is the collection of nested linear subspaces $V_{\bullet}:=(V_1 \subset V_2 \subset \ldots V_n=\C^n)$ where each $V_i$ is an $i$-dimensional subspace in $\C^n$. 
We consider a weakly increasing function $h:\{1,2,\ldots,n \} \to \{1,2,\ldots,n \}$ satisfying $h(j) \geq j \ {\rm for} \ j=1,\ldots,n$. This function is called a {\bf Hessenberg function}.
De~Mari-Procesi-Shayman (\cite{dMS}, \cite{dMPS}) defined a {\bf Hessenberg variety} $\Hess(X,h)$ associated with a linear operator $X: \C^n \to \C^n$ and a Hessenberg function $h:\{1,2,\ldots,n \} \to \{1,2,\ldots,n \}$ as the following subvariety of the flag variety:
\begin{equation} \label{eq:DefHess(X,h)}
\Hess(X,h):=\{V_{\bullet} \in \mathcal{F}\ell(\C^n) \mid XV_i \subset V_{h(i)} \ {\rm for} \ i=1,2,\ldots,n \}.
\end{equation}
We note that if $h(j)=n$ for all $j=1,2,\ldots,n$ or $X$ is the zero matrix, then the corresponding Hessenberg variety coincides with the whole full flag variety $\mathcal{F}\ell(\C^n)$. 
The family of Hessenberg varieties also contains Springer varieties related to geometric representations of Weyl group (\cite{Spr1}, \cite{Spr2}) and Peterson variety related to the quantum cohomology of the flag variety (\cite{Ko}, \cite{R}). 
Recently, it has been found that Hessenberg varieties have surprising connection with other research areas such as hyperplane arrangements (\cite{STy}, \cite{AHMMS}) and graph theory (\cite{SW}, \cite{BC}, \cite{Guay}).

In this paper we concentrate on Hessenberg varieties $\Hess(N,h)$ associated with a regular nilpotent operator $N$ i.e. a matrix whose Jordan form consists of exactly one Jordan block with corresponding eigenvalue equal to $0$. 
The Hessenberg variety $\Hess(N,h)$ is called a {\bf regular nilpotent Hessenberg variety}.
If we take $h(j)=j+1$ for $1 \leq j \leq n-1$ and $h(n)=n$, then the corresponding regular nilpotent Hessenberg variety is called the {\bf Peterson variety}.
Regular nilpotent Hessenberg varieties $\Hess(N,h)$ can be regarded as a (discrete) family of 
subvarieties of the flag variety connecting Peterson variety and the flag variety itself.
The complex dimension of $\Hess(N,h)$ is $\sum_{j=1}^n (h(j)-j)$ (\cite{STy}).
A regular nilpotent Hessenberg variety is singular in general (\cite{Ko}, \cite{IY}).
The cohomology ring of a regular nilpotent Hessenberg variety has been studied from various viewpoints (e.g. \cite{BrionCarrell}, \cite{Ty2}, \cite{HT1}, \cite{Pr1}, \cite{Dre0}, \cite{HHM}, \cite{AHHM}, \cite{AHMMS}).
To describe an explicit presentation of the cohomology\footnote{Throughout this paper (unless explicitly stated otherwise) we work with cohomology with coefficients in $\Q$.} ring of a regular nilpotent Hessenberg variety, polynomials $f_{i,j}$ were introduced in \cite{AHHM} as follows. 
For $1 \leq j \leq i$, we define a polynomial $f_{i,j}$ by
\begin{equation} \label{eq:f_{i,j}}
f_{i,j}:=\sum_{k=1}^{j} \big( \prod_{\ell=j+1}^i (x_k-x_{\ell})\big)x_k.
\end{equation}
Here, we take by convention $\prod_{\ell=j+1}^i (x_k-x_{\ell})=1$ whenever $j=i$.
Then from the result of \cite{AHHM}, the following isomorphism as $\Q$-algebras holds
\begin{equation} \label{eq:AHHM} 
H^*(\Hess(N,h)) \cong \Q[x_1,\ldots,x_n]/(f_{h(1),1},f_{h(2),2},\ldots,f_{h(n),n}).
\end{equation}
Our main theorem is the following. 

\begin{theorem} \label{theorem:main1}
Let $j,i$ be positive integers with $1 \leq j<i \leq n$. 
Let $f_{i-1,j}$ be the polynomial in $\eqref{eq:f_{i,j}}$ and $\mathfrak{S}_{w}$ the Schubert polynomial for a permutation $w$ in  the symmetric group $S_n$. 
Then we have
\begin{equation} \label{eq:main1}
f_{i-1,j}=\sum_{k=1}^{i-j} (-1)^{k-1} \mathfrak{S}_{w_k^{(i,j)}}
\end{equation}
where $w_k^{(i,j)}$ for $1 \leq k \leq i-j$ is a permutation in $S_n$ defined by
\begin{equation} \label{eq:w_k^{(i,j)}}
w_k^{(i,j)}:=(s_{i-k}s_{i-k-1} \ldots s_{j})(s_{i-k+1}s_{i-k+2} \ldots s_{i-1}).
\end{equation}
Here, $s_r$ denotes the transposition of $r$ and $r+1$ for $r=1,2,\ldots,n-1$
and we take by convention $(s_{i-k+1}s_{i-k+2} \ldots s_{i-1})=\id$ whenever $k=1$.
\end{theorem}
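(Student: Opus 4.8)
The plan is to prove $\eqref{eq:main1}$ by induction on $i-j$, using the divided difference operators $\partial_r g=(g-s_rg)/(x_r-x_{r+1})$ to control the difference of the two sides. First I would work out the combinatorics of the permutations $w_k^{(i,j)}$ from the reduced word $\eqref{eq:w_k^{(i,j)}}$. A direct computation of the one-line notation shows that $w_k^{(i,j)}$ fixes everything outside $\{j,j+1,\ldots,i\}$, sends $j\mapsto i-k+1$, sends $j+p\mapsto j+p-1$ for $1\le p\le i-k-j$, sends $m\mapsto m+1$ for $i-k+1\le m\le i-1$, and sends $i\mapsto i-k$. Consequently $\ell(w_k^{(i,j)})=i-j$ (so $\eqref{eq:w_k^{(i,j)}}$ is a reduced word), the descent set of $w_k^{(i,j)}$ is contained in $\{j,i-1\}$, there is a descent at $j$ exactly when $k\le i-j-1$, and a descent at $i-1$ exactly when $k\ge 2$. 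From this I would extract the four facts I need: (a) $\mathfrak{S}_{w_k^{(i,j)}}$ is symmetric in $x_1,\ldots,x_j$ and lies in $\Q[x_1,\ldots,x_{i-1}]$; (b) cancelling the last letter shows $w_k^{(i,j)}s_{i-1}=w_{k-1}^{(i-1,j)}$ with length dropping by one when $k\ge 2$, so $\partial_{i-1}\mathfrak{S}_{w_k^{(i,j)}}=\mathfrak{S}_{w_{k-1}^{(i-1,j)}}$ for $2\le k\le i-j$ while $\partial_{i-1}\mathfrak{S}_{w_1^{(i,j)}}=0$ when $i-j\ge 2$; (c) moving the rightmost $s_j$ past the commuting block $s_{i-k+1}\cdots s_{i-1}$ shows $w_k^{(i,j)}s_j=w_k^{(i,j+1)}$ with length dropping by one when $k\le i-j-1$, so $\partial_j\mathfrak{S}_{w_k^{(i,j)}}=\mathfrak{S}_{w_k^{(i,j+1)}}$ for $1\le k\le i-j-1$ while $\partial_j\mathfrak{S}_{w_{i-j}^{(i,j)}}=0$ when $i-j\ge 2$; and (d) $\partial_r\mathfrak{S}_{w_k^{(i,j)}}=0$ whenever $j+1\le r\le i-2$.

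Next I would record two identities satisfied by the polynomials in $\eqref{eq:f_{i,j}}$ under divided differences, both proved by a short manipulation. Grouping the factor $x_k-x_{i-1}$ in each summand of $f_{i-1,j}$ and using $\partial_{i-1}(x_k-x_{i-1})=-1$ gives
\[
\partial_{i-1}f_{i-1,j}=-f_{i-2,j}\qquad(i-j\ge 2).
\]
Grouping the factor $x_k-x_{j+1}$ and treating the summand $k=j$ separately (it contributes $x_j\prod_{\ell=j+2}^{i-1}(x_j-x_\ell)+x_{j+1}\prod_{\ell=j+2}^{i-1}(x_{j+1}-x_\ell)$ after applying $\partial_j$) gives
\[
\partial_j f_{i-1,j}=f_{i-1,j+1}\qquad(i-j\ge 1).
\]
I would also note the elementary facts that $f_{i-1,j}$ is homogeneous of degree $i-j$, is symmetric in $x_1,\ldots,x_j$, lies in $\Q[x_1,\ldots,x_{i-1}]$, and satisfies $\partial_r f_{i-1,j}=0$ for $j+1\le r\le i-2$ (because then each summand contains the $s_r$-invariant factor $(x_k-x_r)(x_k-x_{r+1})$).

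Now I would run the induction on $i-j\ge 1$. The base case $i-j=1$ is immediate, both sides of $\eqref{eq:main1}$ being $x_1+\cdots+x_j$. Assume $i-j\ge 2$ and that $\eqref{eq:main1}$ holds whenever the difference of the two parameters is smaller, and put
\[
D:=f_{i-1,j}-\sum_{k=1}^{i-j}(-1)^{k-1}\mathfrak{S}_{w_k^{(i,j)}},
\]
a polynomial that is homogeneous of degree $i-j$, lies in $\Q[x_1,\ldots,x_{i-1}]$, and is symmetric in $x_1,\ldots,x_j$. Applying $\partial_{i-1}$ and combining the first recursion with fact (b) and the inductive hypothesis for $(i-1,j)$ yields $\partial_{i-1}D=-f_{i-2,j}+f_{i-2,j}=0$; applying $\partial_j$ and combining the second recursion with fact (c) and the inductive hypothesis for $(i,j+1)$ yields $\partial_j D=f_{i-1,j+1}-f_{i-1,j+1}=0$; and for $j+1\le r\le i-2$ the observations above give $\partial_r D=0$. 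Hence $\partial_r D=0$ for every $r$ with $j\le r\le i-1$, so $D$ is symmetric in $x_j,x_{j+1},\ldots,x_i$; since it is also symmetric in $x_1,\ldots,x_j$, it is symmetric in $x_1,\ldots,x_i$. But $D$ does not involve $x_i$, so by symmetry it involves none of $x_1,\ldots,x_i$; hence $D$ is a constant, and being homogeneous of degree $i-j\ge 1$ it is $0$. This is exactly $\eqref{eq:main1}$.

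The main obstacle is the first step: one must carry out the intricate bookkeeping that produces the one-line notation and descent set of $w_k^{(i,j)}$ and, crucially, identifies $w_k^{(i,j)}s_{i-1}$ with $w_{k-1}^{(i-1,j)}$ and $w_k^{(i,j)}s_j$ with $w_k^{(i,j+1)}$. Everything after that is either a one-line divided-difference computation or a formal consequence of the standard properties of Schubert polynomials ($\partial_r\mathfrak{S}_w=\mathfrak{S}_{ws_r}$ when $\ell(ws_r)<\ell(w)$ and $0$ otherwise, homogeneity of degree $\ell(w)$, symmetry in $x_r,x_{r+1}$ exactly when $w$ has no descent at $r$, and $\mathfrak{S}_w\in\Q[x_1,\ldots,x_d]$ when $d$ bounds the descents of $w$), so once the combinatorics of $\eqref{eq:w_k^{(i,j)}}$ is in place the argument closes quickly.
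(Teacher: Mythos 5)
Your argument is correct, but it follows a genuinely different route from the paper. The paper first proves the case $j=1$ (Proposition~\ref{prop:j=1}) by induction on $n$, expanding $(x_1-x_{n-1})f_{n-2,1}=(\mathfrak{S}_{s_1}+\mathfrak{S}_{s_{n-2}}-\mathfrak{S}_{s_{n-1}})\sum_k(-1)^{k-1}\mathfrak{S}_{w_k^{(n-1,1)}}$ via Monk's formula and cancelling terms, and only then deduces the general case by induction on $j$ using $\partial_{j-1}f_{i-1,j-1}=f_{i-1,j}$ (Proposition~\ref{prop:DDO}) together with the descent analysis at $j-1$. You bypass Monk's formula entirely: you use both recursions $\partial_j f_{i-1,j}=f_{i-1,j+1}$ and $\partial_{i-1}f_{i-1,j}=-f_{i-2,j}$ (the latter is exactly \eqref{eq:AbeDDO} from Remark~2.2), show that the difference $D$ of the two sides of \eqref{eq:main1} is killed by all $\partial_r$, $j\le r\le i-1$, and conclude by the symmetry-plus-bounded-variables argument that a degree-$(i-j)$ homogeneous polynomial symmetric in $x_1,\ldots,x_i$ but free of $x_i$ must vanish; your one-line/descent computation for $w_k^{(i,j)}$ (length $i-j$, descents in $\{j,i-1\}$, $w_k^{(i,j)}s_{i-1}=w_{k-1}^{(i-1,j)}$, $w_k^{(i,j)}s_j=w_k^{(i,j+1)}$) is correct, and the variable bound on $\mathfrak{S}_{w_k^{(i,j)}}$ also follows from the stability property \eqref{eq:stabilitySchubertPolynomial} since $w_k^{(i,j)}\in S_i$. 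What your approach buys is a single induction on $i-j$ with no Monk-type product expansion and no term-by-term cancellation bookkeeping; what the paper's approach buys is that the base case computation via Monk's formula meshes with the geometric picture of Section~\ref{sect:Geometric observation of the main theorem}. One small indexing slip: the identity $\partial_j f_{i-1,j}=f_{i-1,j+1}$ should carry the hypothesis $i-j\ge 2$ rather than $i-j\ge 1$ (for $i-j=1$ the right-hand side is not defined), but since you invoke it only inside the induction step where $i-j\ge 2$, this does not affect the proof.
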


We can interpret the equality \eqref{eq:main1} in Theorem~\ref{theorem:main1} from a geometric viewpoint under the circumstances of having a codimension one Hessenberg variety $\Hess(N,h')$ in the original Hessenberg variety $\Hess(N,h)$. 
We will discuss more details in Section~\ref{sect:Geometric observation of the main theorem}.

\bigskip

\section{Divided difference operator} \label{sect:Divided difference operator}

In this section, we observe a new property of polynomials $f_{i,j}$ in \eqref{eq:f_{i,j}} related with the divided difference operator defined by Bernstein-Gelfand-Gelfand and Demazure. 
This is the key property for the proof of the main theorem.
We first recall the definition of the divided difference operator and the Schubert polynomials.
For general reference, see \cite{Fult}.

Let $f$ be a polynomial in $\Z[x_1,\ldots,x_n]$ and $s_i$ the transposition of $i$ and $i+1$ for any $i=1,2,\ldots, n-1$.
Let $s_i(f)$ denote the result of interchanging $x_i$ and $x_{i+1}$ in $f$.
Then the {\bf divided difference operator} $\partial_i$ on the polynomial ring $\Z[x_1,\ldots,x_n]$ is defined by the formula 
\begin{equation} \label{eq:DDO}
\partial_i (f):=\frac{f-s_i(f)}{x_i-x_{i+1}}.
\end{equation}
Since $f-s_i(f)$ is divisible by $x_i-x_{i+1}$, $\partial_i (f)$ is always a polynomial.
If $f$ is homogeneous of degree $d$, then $\partial_i (f)$ is homogeneous of degree $d-1$.

For a reduced expression $u=s_{i_1}s_{i_2}\ldots s_{i_r}$, we set $\partial_{u}=\partial_{i_1}\partial_{i_2}\ldots \partial_{i_r}$. 
Since the divided difference operators satisfy the relations $\partial_{i}\partial_{i+1}\partial_{i}=\partial_{i+1}\partial_{i}\partial_{i+1}$ and $\partial_{i}^2 = 0$, the operator $\partial_{u}$ is independent of the choice of reduced expressions for $u$. 
The {\bf Schubert polynomial} $\mathfrak{S}_{w}$ for a permutation $w$ in the symmetric group $S_n$ is defined as follows. 
For $w_0 = [n, n-1, \ldots , 1] \in S_n$ the permutation of the longest length in one-line notation, we define
$$
\mathfrak{S}_{w_0}=\mathfrak{S}_{w_0}(x_1,\ldots,x_n) = x_1^{n-1}x_2^{n-2}\cdots x_{n-1}.
$$
For general permutation $w$ in $S_n$, write $w = w_0 s_{i_1} s_{i_2} \cdots s_{i_r}$ with $\ell(w_0 s_{i_1} s_{i_2} \cdots s_{i_p}) = \ell(w_0)-p$ for $1 \leq p \leq r$.
Then the Schubert polynomial is inductively defined by
\begin{align*}
\mathfrak{S}_{w}=&\mathfrak{S}_{w}(x_1,\ldots,x_n)= \partial_{i_r} \circ \ldots \circ \partial_{i_2} \circ\partial_{i_1} (\mathfrak{S}_{w_0}(x_1,\ldots,x_n)) \\
=& \partial_{i_r} \circ \ldots \circ \partial_{i_2} \circ\partial_{i_1} (x_1^{n-1}x_2^{n-2}\cdots x_{n-1}).
\end{align*}
In general, Schubert polynomials have the following property 
\begin{equation}\label{eq:SchubertDDO}
\partial_i\mathfrak{S}_w=
\begin{cases}
    \mathfrak{S}_{ws_i} \ &{\rm if } \ w(i)>w(i+1),  \\
    0                           &{\rm if } \ w(i)<w(i+1).
  \end{cases}
\end{equation}
Note that the Schubert polynomial $\mathfrak{S}_{w}$ is a homogeneous polynomial in $\Z[x_1,\ldots,x_{n-1}]$ of degree $\ell(w)$ which is the number of inversions in $w$, called the {\bf length} of $w$, i.e.,
$$ 
\ell(w)=\# \{j<i \mid w(j)>w(i) \}.
$$ 
Schubert polynomials have an important property that $\mathfrak{S}_{w}$ is in fact independent of $n$ in the following sense. 
For $w \in S_n$ and $m \geq n$, we define $w^{(m)}\in S_m$ by $w^{(m)}(i)=w(i)$ for $1 \leq i \leq n$ and $w^{(m)}(i)=i$ for $n+1 \leq i \leq m$.
Then we have
\begin{equation} \label{eq:stabilitySchubertPolynomial}
\mathfrak{S}_{w}=\mathfrak{S}_{w^{(m)}}.
\end{equation}

The following proposition is the key property for the proof of the main theorem.

\begin{prop}\label{prop:DDO}
Let $j,i$ be positive integers with $j<i$. 
Let $f_{i,j}$ be the polynomial in $\eqref{eq:f_{i,j}}$ and $\partial_i$ the divided difference operator in $\eqref{eq:DDO}$. 
Then we have
\begin{align}
\partial_j (f_{i,j})=f_{i,j+1}. \label{eq:HoriguchiDDO} 
\end{align}
\end{prop}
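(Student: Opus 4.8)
The plan is to compute $\partial_j(f_{i,j})$ directly from the defining formula \eqref{eq:f_{i,j}} by splitting the sum $f_{i,j}=\sum_{k=1}^{j}\bigl(\prod_{\ell=j+1}^{i}(x_k-x_\ell)\bigr)x_k$ into the terms with $k<j$ and the single term with $k=j$. First I would observe that the transposition $s_j$ (swapping $x_j$ and $x_{j+1}$) acts on each summand: for $k<j$, the factor $\prod_{\ell=j+1}^{i}(x_k-x_\ell)$ contains $(x_k-x_{j+1})$ but not $(x_k-x_j)$, and the leading $x_k$ is fixed; for $k=j$, the summand is $\bigl(\prod_{\ell=j+1}^{i}(x_j-x_\ell)\bigr)x_j$, whose first factor is $(x_j-x_{j+1})$. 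So I would write $f_{i,j}=\bigl(\prod_{\ell=j+1}^{i}(x_j-x_\ell)\bigr)x_j + \sum_{k=1}^{j-1}\bigl(\prod_{\ell=j+1}^{i}(x_k-x_\ell)\bigr)x_k$ and apply $\partial_j$ to each piece using the Leibniz-type behavior of the divided difference operator.

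The key computation is on the $k=j$ term: setting $P=\prod_{\ell=j+2}^{i}(x_j-x_\ell)$ (the part of the product not involving $x_{j+1}$, with $P=1$ when $j+1=i$), the $k=j$ summand is $(x_j-x_{j+1})\,P\,x_j$. Applying $\partial_j$ to a polynomial that already has $(x_j-x_{j+1})$ as a factor, $\partial_j\bigl((x_j-x_{j+1})g\bigr)=g+ (x_j-x_{j+1})\partial_j(g)$ is not quite the cleanest route; instead I would just use $\partial_j(fg)=\partial_j(f)g + s_j(f)\partial_j(g)$ with $f=(x_j-x_{j+1})$, giving $\partial_j\bigl((x_j-x_{j+1})Px_j\bigr)=Px_j + s_j(Px_j)\cdot\partial_j(1)\cdot(\text{correction})$ — more carefully, $\partial_j\bigl((x_j-x_{j+1})h\bigr)=h+ (x_j-x_{j+1})\partial_j h$ with $h=Px_j$, but since $P$ and $x_j$ are both being swapped one must track $s_j(P)=\prod_{\ell=j+2}^{i}(x_{j+1}-x_\ell)$. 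Carrying this out should yield, after combining with the $\partial_j$ of the $k<j$ terms (where $\partial_j$ acts only on the factor $(x_k-x_{j+1})$ inside the product, producing $\prod_{\ell=j+2}^{i}(x_k-x_\ell)\cdot x_k$ times a sign), exactly $f_{i,j+1}=\sum_{k=1}^{j+1}\bigl(\prod_{\ell=j+2}^{i}(x_k-x_\ell)\bigr)x_k$. The new top index $k=j+1$ in $f_{i,j+1}$ is precisely what emerges from the $k=j$ term after differentiation, which is the structural heart of the identity.

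I expect the main obstacle to be bookkeeping the signs and the $s_j$-images of the partial products consistently: each summand of $f_{i,j}$ for $k<j$ contributes $\partial_j$ applied to $(x_k-x_{j+1})\cdot\bigl(\prod_{\ell\neq j+1}(x_k-x_\ell)\bigr)x_k$, and one must check that $\partial_j\bigl((x_k-x_{j+1})\cdot Q\bigr)= -Q$ where $Q=\prod_{\ell=j+2}^{i}(x_k-x_\ell)\cdot x_k$ is symmetric in $x_j,x_{j+1}$ (true since $Q$ involves neither variable when $k<j$ and $j\notin\{j+1,\dots,i\}$... wait, $Q$ contains no $x_j$ and no $x_{j+1}$, so $s_j(Q)=Q$ and $\partial_j(Q)=0$), hence the $k<j$ terms reproduce $\sum_{k=1}^{j-1}\prod_{\ell=j+2}^{i}(x_k-x_\ell)\,x_k\cdot(-1)$ — and the sign here must be reconciled against the contribution of the $k=j$ term. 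A clean way to avoid sign confusion is to note $\partial_j\bigl((x_k-x_{j+1})Q\bigr)=\frac{(x_k-x_{j+1})Q-(x_k-x_j)Q}{x_j-x_{j+1}}=\frac{(x_j-x_{j+1})Q}{x_j-x_{j+1}}=Q$, so in fact the $k<j$ terms contribute $+\sum_{k=1}^{j-1}\prod_{\ell=j+2}^{i}(x_k-x_\ell)\,x_k$, and the $k=j$ term must then supply the missing $k=j$ and $k=j+1$ summands of $f_{i,j+1}$; verifying that last point is the one genuinely nontrivial algebraic step, but it reduces to the identity $\partial_j\bigl((x_j-x_{j+1})Px_j\bigr)=Px_j + s_j(P)x_{j+1}$ when $\partial_j(Px_j\text{ after removing the factor})$ is handled, which upon expansion matches $\prod_{\ell=j+2}^{i}(x_j-x_\ell)x_j+\prod_{\ell=j+2}^{i}(x_{j+1}-x_\ell)x_{j+1}$, exactly the $k=j$ and $k=j+1$ terms of $f_{i,j+1}$. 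Once these three contributions are assembled the proposition follows.
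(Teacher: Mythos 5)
Your proposal is correct and is essentially the paper's own argument: both peel off the $\ell=j+1$ factor from each product and compute the divided difference directly from the definition, with the $k=j$ summand producing the new $k=j$ and $k=j+1$ terms of $f_{i,j+1}$; the only (cosmetic) difference is that you apply $\partial_j$ summand by summand while the paper subtracts $s_j(f_{i,j})$ from $f_{i,j}$ in one step and factors out $(x_j-x_{j+1})$. Your final termwise computations $\partial_j\bigl((x_k-x_{j+1})Q\bigr)=Q$ for $k<j$ and $\partial_j\bigl((x_j-x_{j+1})Px_j\bigr)=Px_j+s_j(P)x_{j+1}$ are right, and they supersede the earlier sign confusion in your draft.
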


\begin{proof}
Since
\begin{align*}
f_{i,j}&=\sum_{k=1}^{j-1} \big( \prod_{\ell=j+2}^i (x_k-x_{\ell})\big)(x_k-x_{j+1})x_k + \big( \prod_{\ell=j+2}^i (x_{j}-x_{\ell})\big)(x_j-x_{j+1})x_{j}, \\
s_j (f_{i,j})&=\sum_{k=1}^{j-1} \big( \prod_{\ell=j+2}^i (x_k-x_{\ell})\big)(x_k-x_j)x_k + \big( \prod_{\ell=j+2}^i (x_{j+1}-x_{\ell})\big)(x_{j+1}-x_{j})x_{j+1}, 
\end{align*}
we have
\begin{align*}
f_{i,j}-s_j (f_{i,j})=(x_j-x_{j+1})\big(\sum_{k=1}^{j+1} \big( \prod_{\ell=j+2}^i (x_k-x_{\ell})\big)x_k \big). 
\end{align*}
Hence, we obtain $\partial_j (f_{i,j})=f_{i,j+1}$.
\end{proof}

\begin{remark}
It also follows that
\begin{align}
\partial_i (f_{i,j})=-f_{i-1,j}. \label{eq:AbeDDO}
\end{align}
The proof is similar to the proof of \eqref{eq:HoriguchiDDO}. 
From \eqref{eq:HoriguchiDDO} together with \eqref{eq:AbeDDO}, we see that every polynomial $f_{i,j}$ for $1 \leq j \leq i \leq n$ is obtained from the single polynomial $f_{n,1}$ by using the divided difference operator.
More concretely, we set
\begin{align*}
F_n:=f_{n,1}=(x_1-x_n)(x_1-x_{n-1})\cdots(x_1-x_2)x_1. 
\end{align*}
Then we obtain
\begin{equation*} 
f_{i,j}=((-1)^{n-i}\partial_{i+1} \partial_{i+2} \ldots \partial_n)(\partial_{j-1} \partial_{j-2} \ldots \partial_1)(F_n).
\end{equation*}
\end{remark}

\bigskip

\section{Proof of Theorem~\ref{theorem:main1}} \label{sect:Proof of Theorem}

In this section we prove Theorem~\ref{theorem:main1}.
To do that, we need Monk's formula.

\begin{theorem} [Monk's formula \cite{Monk}, see also \cite{Fult} p.180] 
Let $\mathfrak{S}_{w}$ be the Schubert polynomial for $w\in S_n$ and $s_r$ the transposition of $r$ and $r+1$. 
Then we have
\begin{equation}\label{eq:Monk}
\mathfrak{S}_{s_r} \cdot \mathfrak{S}_{w}=\sum \mathfrak{S}_{wt_{p \, q}}
\end{equation}
where $t_{p \, q}$ is the transposition interchanging values of $p$ and $q$, and the sum is over all $1 \leq p \leq r  < q$ such that $w(p)<w(q)$ and $w(i)$ is not in the interval $(w(p),w(q))$ for any $i$ in the interval $(p,q)$. \\
\end{theorem}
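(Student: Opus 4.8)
The plan is to prove Monk's formula by expanding the product $\mathfrak{S}_{s_r}\cdot\mathfrak{S}_w$ in the Schubert basis and pinning down each coefficient with divided difference operators. First I would record the identity $\mathfrak{S}_{s_r}=x_1+x_2+\cdots+x_r$, which follows from \eqref{eq:SchubertDDO}: this degree-one polynomial satisfies $\partial_r(x_1+\cdots+x_r)=1=\mathfrak{S}_{\id}$ and $\partial_i(x_1+\cdots+x_r)=0$ for $i\neq r$, exactly the defining divided-difference behaviour of $\mathfrak{S}_{s_r}$. Since the Schubert polynomials $\{\mathfrak{S}_v\}$, with $v$ ranging over $S_\infty=\bigcup_m S_m$ (using the stability \eqref{eq:stabilitySchubertPolynomial}), form a $\Z$-basis of $\Z[x_1,x_2,\ldots]$, and $\mathfrak{S}_{s_r}\mathfrak{S}_w$ is homogeneous of degree $\ell(w)+1$, there is a unique finite expansion $\mathfrak{S}_{s_r}\mathfrak{S}_w=\sum_{\ell(v)=\ell(w)+1}c_v\,\mathfrak{S}_v$ with $c_v\in\Z$. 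Writing $R_w:=\sum\mathfrak{S}_{wt_{pq}}$ for the claimed Monk sum, the entire problem becomes the identity $L_w:=\mathfrak{S}_{s_r}\mathfrak{S}_w=R_w$, i.e. $c_v=1$ precisely for $v=wt_{pq}$ with $(p,q)$ a Monk pair and $c_v=0$ otherwise.

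The mechanism I would use is an induction on $\ell(w)$ with base case $w=\id$ (where Monk's condition leaves only $(p,q)=(r,r+1)$, so $R_{\id}=\mathfrak{S}_{s_r}=L_{\id}$), comparing the two sides through all divided difference operators. The key tool is the Leibniz rule $\partial_i(fg)=(\partial_i f)\,g+(s_i f)(\partial_i g)$ together with the values $\partial_i\mathfrak{S}_{s_r}=\delta_{i,r}$ and $s_i\mathfrak{S}_{s_r}=\mathfrak{S}_{s_r}$ for $i\neq r$. For a descent $i\neq r$ of $w$ these collapse the Leibniz rule to $\partial_i L_w=\mathfrak{S}_{s_r}\,\partial_i\mathfrak{S}_w=\mathfrak{S}_{s_r}\mathfrak{S}_{ws_i}=L_{ws_i}$, which by the inductive hypothesis equals $R_{ws_i}$, so for such $i$ the inductive step reduces to the combinatorial identity $\partial_i R_w=R_{ws_i}$. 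To turn agreement of all divided differences into the equality $L_w=R_w$, I would extract coefficients directly via the orthogonality $\partial_v\mathfrak{S}_u=\delta_{u,v}$ for $\ell(u)=\ell(v)$ — the length-matching case of $\partial_v\mathfrak{S}_u=\mathfrak{S}_{uv^{-1}}$ when $\ell(uv^{-1})=\ell(u)-\ell(v)$ and $0$ otherwise, obtained by iterating \eqref{eq:SchubertDDO} — so that $c_v=\partial_v(\mathfrak{S}_{s_r}\mathfrak{S}_w)$ and the residual symmetric-function ambiguity that a bare divided-difference comparison leaves behind is eliminated.

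The step I expect to be the main obstacle is the purely combinatorial identity $\partial_i R_w=R_{ws_i}$, i.e. that applying $\partial_i$ to the Monk sum of $w$ reproduces the Monk sum of $ws_i$. Unwinding the left side through \eqref{eq:SchubertDDO} requires showing that, as $(p,q)$ ranges over the Monk pairs of $w$ for which $wt_{pq}$ has a descent at $i$, the permutations $wt_{pq}s_i$ range bijectively and with coefficient one over $\{ws_i\,t_{p'q'}\}$ as $(p',q')$ ranges over the Monk pairs of $ws_i$; this is a delicate case analysis of how the transposition $t_{pq}$ commutes with $s_i$ relative to the positions $i,i+1$, tracking both the length drop and the ``no intermediate value'' condition. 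A second subtlety is the case $i=r$, where $\partial_r\mathfrak{S}_{s_r}=1$ and $s_r\mathfrak{S}_{s_r}=\mathfrak{S}_{s_r}-x_r+x_{r+1}$ inject correction terms that must be absorbed into the combinatorics. Conceptually this bookkeeping is the translation of the Chevalley formula in $H^*(\mathcal{F}\ell(\C^n))$ into Schubert polynomials: identifying $\mathfrak{S}_w$ with the Schubert class $\sigma_w$ and $\mathfrak{S}_{s_r}$ with the Schubert divisor class, Monk's rule is $\sigma_{s_r}\cdot\sigma_w=\sum_{\alpha>0}\langle\omega_r,\alpha^\vee\rangle\,\sigma_{ws_\alpha}$ specialized to type $A$, where $\alpha=e_p-e_q$ gives $s_\alpha=t_{pq}$, $\langle\omega_r,\alpha^\vee\rangle=1$ exactly when $p\le r<q$, and $\ell(ws_\alpha)=\ell(w)+1$ is Monk's covering condition; this geometric route gives the quickest proof if one is willing to assume that Schubert polynomials represent Schubert classes.
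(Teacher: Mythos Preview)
The paper does not give a proof of Monk's formula; the theorem is quoted from \cite{Monk} (with a pointer to \cite{Fult}, p.~180) and then used as a black box in the proof of Proposition~\ref{prop:j=1}. There is therefore no argument in the paper for you to be compared against.

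On its own merits your outline is a standard and viable route --- compute $\mathfrak{S}_{s_r}=x_1+\cdots+x_r$, expand in the Schubert basis, and pin down coefficients by induction on $\ell(w)$ via the Leibniz rule and the orthogonality $\partial_v\mathfrak{S}_u=\delta_{u,v}$ for $\ell(u)=\ell(v)$. But as you yourself flag, the substance of the proof is the combinatorial verification that $\partial_i R_w=R_{ws_i}$ for descents $i\neq r$ of $w$, together with the handling of the correction terms when $i=r$; you describe this only as a ``delicate case analysis'' without carrying it out, so what you have is a plan rather than a proof. The geometric shortcut you mention at the end (Chevalley's formula in $H^*(\mathcal{F}\ell(\C^n))$ together with the identification of Schubert polynomials with Schubert classes) is essentially what the cited references do and is the most efficient way to close the gap.
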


Using Monk's formula, we first prove the following proposition which is the case $i=n$ and $j=1$ of Theorem~\ref{theorem:main1}.

\begin{prop} \label{prop:j=1}
Let $n>1$ and $f_{n-1,1}$ the polynomial in $\eqref{eq:f_{i,j}}$.
Let $\mathfrak{S}_{w}$ be the Schubert polynomial for $w\in S_n$. 
Then we have
\begin{equation*}
f_{n-1,1}=\sum_{k=1}^{n-1} (-1)^{k-1} \mathfrak{S}_{w_k^{(n,1)}}
\end{equation*}
where $w_k^{(n,1)}\in S_n$ is the permutation defined in \eqref{eq:w_k^{(i,j)}}.
\end{prop}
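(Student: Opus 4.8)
The plan is to prove the statement by induction on $n$ (the case $n=2$ being the trivial $f_{1,1}=x_1=\mathfrak{S}_{s_1}=\mathfrak{S}_{w_1^{(2,1)}}$), studying the difference
\begin{equation*}
D_n:=\sum_{k=1}^{n-1}(-1)^{k-1}\mathfrak{S}_{w_k^{(n,1)}}-f_{n-1,1}\in\Q[x_1,\ldots,x_{n-1}]
\end{equation*}
through divided difference operators. The first goal is to show $\partial_i D_n=0$ for all $i\in\{2,3,\ldots,n-1\}$; since $D_n$ is homogeneous of degree $n-1$ and involves no $x_n$, invariance under $s_2,\ldots,s_{n-1}$ together with the absence of $x_n$ strips away $x_{n-1},x_{n-2},\ldots$ in turn, forcing $D_n=c\,x_1^{n-1}$ for a scalar $c$. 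The second goal is then to identify $c$ and see that it vanishes.

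For the first goal I would start from the one-line expression of $w_k^{(n,1)}$ read off from \eqref{eq:w_k^{(i,j)}}, namely $w_k^{(n,1)}(1)=n-k+1$, $w_k^{(n,1)}(p)=p-1$ for $2\le p\le n-k$, $w_k^{(n,1)}(p)=p+1$ for $n-k+1\le p\le n-1$, and $w_k^{(n,1)}(n)=n-k$. For $2\le i\le n-2$ every $w_k^{(n,1)}$ has an ascent at $i$, so $\partial_i\mathfrak{S}_{w_k^{(n,1)}}=0$ by \eqref{eq:SchubertDDO}, while $\partial_i f_{n-1,1}=0$ because swapping $x_i$ and $x_{i+1}$ just permutes two factors of $f_{n-1,1}=x_1\prod_{\ell=2}^{n-1}(x_1-x_\ell)$. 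For $i=n-1$, each $w_k^{(n,1)}$ with $k\ge 2$ has a descent at $n-1$ and $w_k^{(n,1)}s_{n-1}$ is $w_{k-1}^{(n-1,1)}$ with the fixed point $n$ appended, so \eqref{eq:SchubertDDO} and stability \eqref{eq:stabilitySchubertPolynomial} give $\partial_{n-1}\mathfrak{S}_{w_k^{(n,1)}}=\mathfrak{S}_{w_{k-1}^{(n-1,1)}}$, while $\partial_{n-1}\mathfrak{S}_{w_1^{(n,1)}}=\partial_{n-1}(x_1^{n-1})=0$; combined with $\partial_{n-1}f_{n-1,1}=-f_{n-2,1}$ (this is \eqref{eq:AbeDDO}) this gives $\partial_{n-1}D_n=-D_{n-1}$, which is $0$ by the induction hypothesis. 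For the second goal, $w_1^{(n,1)}=[\,n,1,2,\ldots,n-1\,]$ is dominant with $\mathfrak{S}_{w_1^{(n,1)}}=x_1^{n-1}$, and $s_{n-1}s_{n-2}\cdots s_1$ is a reduced word for it, so $\partial_{w_1^{(n,1)}}=\partial_{n-1}\partial_{n-2}\cdots\partial_1$. Applying $\partial_{w_1^{(n,1)}}$ to $c\,x_1^{n-1}=c\,\mathfrak{S}_{w_1^{(n,1)}}$ returns $c$; applying it to $\sum_{k}(-1)^{k-1}\mathfrak{S}_{w_k^{(n,1)}}$ returns $1$, because $\partial_{w_1^{(n,1)}}\mathfrak{S}_{w_k^{(n,1)}}=\delta_{k,1}$ (all $w_k^{(n,1)}$ have length $n-1$, and the consequence $\partial_u\mathfrak{S}_v=\mathfrak{S}_{vu^{-1}}$ of \eqref{eq:SchubertDDO} is nonzero only when $\ell(vu^{-1})=\ell(v)-\ell(u)$, here only for $w_k^{(n,1)}=w_1^{(n,1)}$); and applying it to $f_{n-1,1}$ also returns $1$, since iterating Proposition~\ref{prop:DDO} gives $\partial_{n-2}\cdots\partial_1 f_{n-1,1}=f_{n-1,n-1}=x_1+\cdots+x_{n-1}$ and then $\partial_{n-1}(x_1+\cdots+x_{n-1})=1$. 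Hence $c=1-1=0$ and $D_n=0$.

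I expect the only genuine work to be the one-line-notation bookkeeping in the first goal: deducing the formula for $w_k^{(n,1)}$, checking the ascents at $2,\ldots,n-2$ for all $k$, and matching $w_k^{(n,1)}s_{n-1}$ with the stabilisation of $w_{k-1}^{(n-1,1)}$ — elementary, but the natural place for errors to hide. (The arrangement of this section suggests the author instead argues through Monk's formula \eqref{eq:Monk}, expanding products $\mathfrak{S}_{s_r}\cdot\mathfrak{S}_u$ and tracking which transpositions survive the betweenness condition, with the alternating signs in \eqref{eq:main1} emerging by telescoping; a further option is to prove the single closed form $\mathfrak{S}_{w_k^{(n,1)}}=x_1^{\,n-k}e_{k-1}(x_2,\ldots,x_{n-1})$, with $e_{k-1}$ an elementary symmetric polynomial, and then expand $x_1\prod_{\ell=2}^{n-1}(x_1-x_\ell)$.)
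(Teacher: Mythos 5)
Your argument is correct, and I checked the key computational claims: the one-line form $w_k^{(n,1)}=[\,n-k+1,\,1,\ldots,n-k-1,\,n-k+2,\ldots,n,\,n-k\,]$ is right, every $w_k^{(n,1)}$ does have ascents at $2,\ldots,n-2$, for $k\ge 2$ there is a descent at $n-1$ with $w_k^{(n,1)}s_{n-1}$ equal to the stabilisation of $w_{k-1}^{(n-1,1)}$, and $\partial_{n-1}f_{n-1,1}=-f_{n-2,1}$ is immediate from $f_{n-1,1}=(x_1-x_{n-1})f_{n-2,1}$; so $\partial_{n-1}D_n=-D_{n-1}=0$ by induction, $D_n$ is symmetric in $x_2,\ldots,x_n$ and free of $x_n$, hence $D_n=c\,x_1^{n-1}$, and the evaluation of $c$ via $\partial_{n-1}\cdots\partial_1$ (using $\partial_u\mathfrak{S}_v\neq 0$ only if $\ell(vu^{-1})=\ell(v)-\ell(u)$, which follows by iterating \eqref{eq:SchubertDDO}, and $\partial_{n-2}\cdots\partial_1 f_{n-1,1}=f_{n-1,n-1}$ from Proposition~\ref{prop:DDO}) gives $c=1-1=0$. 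However, this is a genuinely different route from the paper: the paper also inducts on $n$, but its inductive step factors $f_{n-1,1}=(x_1-x_{n-1})f_{n-2,1}$, writes $x_1-x_{n-1}=\mathfrak{S}_{s_1}+\mathfrak{S}_{s_{n-2}}-\mathfrak{S}_{s_{n-1}}$, and expands all products by Monk's formula \eqref{eq:Monk}, with the alternating sum emerging from explicit cancellations among the resulting Schubert classes. Your proof avoids Monk's formula entirely and is uniform with the way the paper deduces Theorem~\ref{theorem:main1} from this proposition (pure divided-difference manipulations via \eqref{eq:SchubertDDO}, \eqref{eq:stabilitySchubertPolynomial}, \eqref{eq:HoriguchiDDO}, \eqref{eq:AbeDDO}); what it costs is the need to pin down the ambiguity $c\,x_1^{n-1}$, for which you invoke the standard (Fulton) fact that the dominant permutation $w_1^{(n,1)}=[n,1,\ldots,n-1]$ has $\mathfrak{S}_{w_1^{(n,1)}}=x_1^{n-1}$ — not stated in the paper, but you could equally well compute $\partial_{n-1}\cdots\partial_1(x_1^{n-1})=1$ directly through complete homogeneous polynomials. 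The paper's Monk computation, by contrast, keeps track of exactly which Schubert classes appear and cancel, which matches the geometric interpretation in Section~\ref{sect:Geometric observation of the main theorem}; the closed form $\mathfrak{S}_{w_k^{(n,1)}}=x_1^{n-k}e_{k-1}(x_2,\ldots,x_{n-1})$ you mention at the end would give a third, still more elementary, verification.
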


\begin{proof}
We prove the proposition by induction on $n$. 
For the base case $n=2$, it holds because $f_{1,1}=x_1=\mathfrak{S}_{s_1}=\mathfrak{S}_{w_1^{(2,1)}}$.
Now we assume $n>2$ and the following equality 
$$
f_{n-2,1}=\sum_{k=1}^{n-2} (-1)^{k-1} \mathfrak{S}_{w_k^{(n-1,1)}}.
$$
Since we have from the definition \eqref{eq:f_{i,j}} that
\begin{align*}
f_{n-1,1}=(x_1-x_{n-1}) f_{n-2,1}&=\big(\mathfrak{S}_{s_1}+\mathfrak{S}_{s_{n-2}}-\mathfrak{S}_{s_{n-1}}\big) \big( \sum_{k=1}^{n-2} (-1)^{k-1} \mathfrak{S}_{w_k^{(n-1,1)}} \big), 
\end{align*}
it is enough to prove the following equality
\begin{equation} \label{eq:claim-prop:j=1}
\big(\mathfrak{S}_{s_1}+\mathfrak{S}_{s_{n-2}}-\mathfrak{S}_{s_{n-1}}\big) \big( \sum_{k=1}^{n-2} (-1)^{k-1} \mathfrak{S}_{w_k^{(n-1,1)}} \big)
=\sum_{k=1}^{n-1} (-1)^{k-1} \mathfrak{S}_{w_k^{(n,1)}}.
\end{equation}
We prove the equality \eqref{eq:claim-prop:j=1} using Monk's formula \eqref{eq:Monk}.\\
{\bf \underline{Case(i)}}:
Using Monk's formula \eqref{eq:Monk}, the calculus of $\mathfrak{S}_{s_1} \cdot \mathfrak{S}_{w_k^{(n-1,1)}}$ is as follows 
\begin{align*} 
\mathfrak{S}_{s_1} \cdot \mathfrak{S}_{w_1^{(n-1,1)}}&=\mathfrak{S}_{w_1^{(n-1,1)}t_{1 \, n}}, \\
\mathfrak{S}_{s_1} \cdot \mathfrak{S}_{w_k^{(n-1,1)}}&=\mathfrak{S}_{w_k^{(n-1,1)}t_{1 \, n-k}} \ \ \ \ \ {\rm if} \ k \neq 1.
\end{align*} 
{\bf \underline{Case(ii)}}:
Using Monk's formula \eqref{eq:Monk}, the calculus of $\mathfrak{S}_{s_{n-2}} \cdot \mathfrak{S}_{w_k^{(n-1,1)}}$ is as follows
\begin{align*} 
\mathfrak{S}_{s_{n-2}} \cdot \mathfrak{S}_{w_1^{(n-1,1)}}&=\mathfrak{S}_{w_1^{(n-1,1)}t_{1 \, n}}+\mathfrak{S}_{w_1^{(n-1,1)}t_{n-2 \, n-1}}, \\
\mathfrak{S}_{s_{n-2}} \cdot \mathfrak{S}_{w_k^{(n-1,1)}}&=\mathfrak{S}_{w_k^{(n-1,1)}t_{n-k-1 \, n-1}}+\mathfrak{S}_{w_k^{(n-1,1)}t_{n-2 \, n}} \ \ \ \ \ {\rm if} \ k \neq 1, \ n-2, \\
\mathfrak{S}_{s_{n-2}} \cdot \mathfrak{S}_{w_{n-2}^{(n-1,1)}}&=\mathfrak{S}_{w_{n-2}^{(n-1,1)}t_{n-2 \, n}}. 
\end{align*} 
{\bf \underline{Case(iii)}}:
Using Monk's formula \eqref{eq:Monk}, the calculus of $\mathfrak{S}_{s_{n-1}} \cdot \mathfrak{S}_{w_k^{(n-1,1)}}$ is as follows
\begin{align*} 
\mathfrak{S}_{s_{n-1}} \cdot \mathfrak{S}_{w_1^{(n-1,1)}}&=\mathfrak{S}_{w_1^{(n-1,1)}t_{1 \, n}}+\mathfrak{S}_{w_1^{(n-1,1)}t_{n-1 \, n}}, \\
\mathfrak{S}_{s_{n-1}} \cdot \mathfrak{S}_{w_k^{(n-1,1)}}&=\mathfrak{S}_{w_k^{(n-1,1)}t_{n-2 \, n}}+\mathfrak{S}_{w_k^{(n-1,1)}t_{n-1 \, n}} \ \ \ \ \ {\rm if} \ k \neq 1. 
\end{align*} 
From Case(i),(ii),(iii) together with equalities $w_1^{(n-1,1)}t_{1 \, n}=w_1^{(n,1)}$ and $w_k^{(n-1,1)}t_{n-1 \, n}=w_{k+1}^{(n,1)}$ for $1 \leq k \leq n-2$, the left hand side of \eqref{eq:claim-prop:j=1} reduces to
\begin{align*} 
&\big( \mathfrak{S}_{w_1^{(n,1)}} - \sum_{k=1}^{n-3} (-1)^{k-1}\mathfrak{S}_{w_{k+1}^{(n-1,1)}t_{1 \, n-k-1}} \big) +\big( \sum_{k=1}^{n-3} (-1)^{k-1}\mathfrak{S}_{w_k^{(n-1,1)}t_{n-k-1 \, n-1}} \big) \\
&-\big( \mathfrak{S}_{w_2^{(n,1)}}+\sum_{k=2}^{n-2} (-1)^{k-1}\mathfrak{S}_{w_{k+1}^{(n,1)}} \big).
\end{align*} 
However, since $w_{k+1}^{(n-1,1)}t_{1 \, n-k-1}=w_{k}^{(n-1,1)}t_{n-k-1 \, n-1}$ for $1 \leq k \leq n-3$, the above expression is equal to the right hand side of \eqref{eq:claim-prop:j=1}.
Therefore, we obtain \eqref{eq:claim-prop:j=1}. 
This completes the induction step and proves the proposition.
\end{proof}

\begin{proof}[Proof of Theorem~$\ref{theorem:main1}$]
We now prove Theorem~$\ref{theorem:main1}$ by induction on $j$. 
For the base case $j=1$, it holds from Proposition~\ref{prop:j=1} together with the property \eqref{eq:stabilitySchubertPolynomial} of Schubert polynomials.
Now we assume $j>1$ and the following equality 
\begin{equation*}
f_{i-1,j-1}=\sum_{k=1}^{i-j+1} (-1)^{k-1} \mathfrak{S}_{w_k^{(i,j-1)}}.
\end{equation*}
Then since
\begin{align*}
w_k^{(i,j-1)}(j-1)>w_k^{(i,j-1)}(j) \ \ \ \ \ &{\rm if} \  1 \leq k \leq i-j, \\ 
w_k^{(i,j-1)}(j-1)<w_k^{(i,j-1)}(j) \ \ \ \ \ &{\rm if} \  k=i-j+1, 
\end{align*}
we have from \eqref{eq:SchubertDDO} that
\begin{equation*}
\partial_{j-1}(\mathfrak{S}_{w_k^{(i,j-1)}})=
\begin{cases}
    \mathfrak{S}_{w_k^{(i,j-1)}s_{j-1}} \ &{\rm if } \ 1 \leq k \leq i-j,  \\
    0                                       &{\rm if } \ k=i-j+1. 
  \end{cases}
\end{equation*}
Therefore, using \eqref{eq:HoriguchiDDO}, we obtain
\begin{align*}
f_{i-1,j}&=\partial_{j-1}(f_{i-1,j-1})
=\partial_{j-1} \big( \sum_{k=1}^{i-j+1} (-1)^{k-1} \mathfrak{S}_{w_k^{(i,j-1)}} \big) \\
&=\sum_{k=1}^{i-j} (-1)^{k-1}\mathfrak{S}_{w_k^{(i,j-1)}s_{j-1}}
=\sum_{k=1}^{i-j} (-1)^{k-1}\mathfrak{S}_{w_k^{(i,j)}}.
\end{align*}
This completes the induction step and proves Theorem~$\ref{theorem:main1}$.
\end{proof}

\smallskip

\section{Geometric meaning of Theorem~\ref{theorem:main1}} \label{sect:Geometric observation of the main theorem}

In this section we observe a geometric meaning of Theorem~\ref{theorem:main1}.
Throughout this section we use the notation 
$$
[n]:=\{1,2,\ldots,n \}.
$$
Recall that a Hessenberg function $h: [n] \to [n]$ is a weakly increasing function satisfying $h(j) \geq j$ for $j\in [n]$. 
We denote a Hessenberg function $h$ by listing its values in sequence, i.e. 
$$
h=(h(1),h(2),\ldots,h(n)).
$$ 
We often regard a Hessenberg function as a configuration of boxes on a square grid of
size $n \times n$ whose shaded boxes correspond to the boxes in the position $(i,j)$ for $i, j\in [n]$ and $i \leq h(j)$.

\begin{example} \label{example:h=(3,3,4,5,5)}
Let $n=5$.
A function $h=(3,3,4,5,5)$ is a Hessenberg function and the corresponding configuration of boxes on a square grid of size $5 \times 5$ is given in Figure~\ref{picture:h=(3,3,4,5,5)}. 
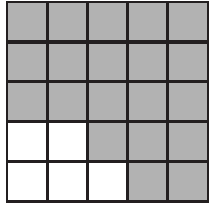
\begin{figure}[h]
\begin{center}
\begin{picture}(75,65)
\put(0,63){\colorbox{gray}}
\put(0,67){\colorbox{gray}}
\put(0,72){\colorbox{gray}}
\put(4,63){\colorbox{gray}}
\put(4,67){\colorbox{gray}}
\put(4,72){\colorbox{gray}}
\put(9,63){\colorbox{gray}}
\put(9,67){\colorbox{gray}}
\put(9,72){\colorbox{gray}}

\put(15,63){\colorbox{gray}}
\put(15,67){\colorbox{gray}}
\put(15,72){\colorbox{gray}}
\put(19,63){\colorbox{gray}}
\put(19,67){\colorbox{gray}}
\put(19,72){\colorbox{gray}}
\put(24,63){\colorbox{gray}}
\put(24,67){\colorbox{gray}}
\put(24,72){\colorbox{gray}}

\put(30,63){\colorbox{gray}}
\put(30,67){\colorbox{gray}}
\put(30,72){\colorbox{gray}}
\put(34,63){\colorbox{gray}}
\put(34,67){\colorbox{gray}}
\put(34,72){\colorbox{gray}}
\put(39,63){\colorbox{gray}}
\put(39,67){\colorbox{gray}}
\put(39,72){\colorbox{gray}}

\put(45,63){\colorbox{gray}}
\put(45,67){\colorbox{gray}}
\put(45,72){\colorbox{gray}}
\put(49,63){\colorbox{gray}}
\put(49,67){\colorbox{gray}}
\put(49,72){\colorbox{gray}}
\put(54,63){\colorbox{gray}}
\put(54,67){\colorbox{gray}}
\put(54,72){\colorbox{gray}}

\put(60,63){\colorbox{gray}}
\put(60,67){\colorbox{gray}}
\put(60,72){\colorbox{gray}}
\put(64,63){\colorbox{gray}}
\put(64,67){\colorbox{gray}}
\put(64,72){\colorbox{gray}}
\put(69,63){\colorbox{gray}}
\put(69,67){\colorbox{gray}}
\put(69,72){\colorbox{gray}}

\put(0,48){\colorbox{gray}}
\put(0,52){\colorbox{gray}}
\put(0,57){\colorbox{gray}}
\put(4,48){\colorbox{gray}}
\put(4,52){\colorbox{gray}}
\put(4,57){\colorbox{gray}}
\put(9,48){\colorbox{gray}}
\put(9,52){\colorbox{gray}}
\put(9,57){\colorbox{gray}}

\put(15,48){\colorbox{gray}}
\put(15,52){\colorbox{gray}}
\put(15,57){\colorbox{gray}}
\put(19,48){\colorbox{gray}}
\put(19,52){\colorbox{gray}}
\put(19,57){\colorbox{gray}}
\put(24,48){\colorbox{gray}}
\put(24,52){\colorbox{gray}}
\put(24,57){\colorbox{gray}}

\put(30,48){\colorbox{gray}}
\put(30,52){\colorbox{gray}}
\put(30,57){\colorbox{gray}}
\put(34,48){\colorbox{gray}}
\put(34,52){\colorbox{gray}}
\put(34,57){\colorbox{gray}}
\put(39,48){\colorbox{gray}}
\put(39,52){\colorbox{gray}}
\put(39,57){\colorbox{gray}}

\put(45,48){\colorbox{gray}}
\put(45,52){\colorbox{gray}}
\put(45,57){\colorbox{gray}}
\put(49,48){\colorbox{gray}}
\put(49,52){\colorbox{gray}}
\put(49,57){\colorbox{gray}}
\put(54,48){\colorbox{gray}}
\put(54,52){\colorbox{gray}}
\put(54,57){\colorbox{gray}}

\put(60,48){\colorbox{gray}}
\put(60,52){\colorbox{gray}}
\put(60,57){\colorbox{gray}}
\put(64,48){\colorbox{gray}}
\put(64,52){\colorbox{gray}}
\put(64,57){\colorbox{gray}}
\put(69,48){\colorbox{gray}}
\put(69,52){\colorbox{gray}}
\put(69,57){\colorbox{gray}}

\put(0,33){\colorbox{gray}}
\put(0,37){\colorbox{gray}}
\put(0,42){\colorbox{gray}}
\put(4,33){\colorbox{gray}}
\put(4,37){\colorbox{gray}}
\put(4,42){\colorbox{gray}}
\put(9,33){\colorbox{gray}}
\put(9,37){\colorbox{gray}}
\put(9,42){\colorbox{gray}}

\put(15,33){\colorbox{gray}}
\put(15,37){\colorbox{gray}}
\put(15,42){\colorbox{gray}}
\put(19,33){\colorbox{gray}}
\put(19,37){\colorbox{gray}}
\put(19,42){\colorbox{gray}}
\put(24,33){\colorbox{gray}}
\put(24,37){\colorbox{gray}}
\put(24,42){\colorbox{gray}}

\put(30,33){\colorbox{gray}}
\put(30,37){\colorbox{gray}}
\put(30,42){\colorbox{gray}}
\put(34,33){\colorbox{gray}}
\put(34,37){\colorbox{gray}}
\put(34,42){\colorbox{gray}}
\put(39,33){\colorbox{gray}}
\put(39,37){\colorbox{gray}}
\put(39,42){\colorbox{gray}}

\put(45,33){\colorbox{gray}}
\put(45,37){\colorbox{gray}}
\put(45,42){\colorbox{gray}}
\put(49,33){\colorbox{gray}}
\put(49,37){\colorbox{gray}}
\put(49,42){\colorbox{gray}}
\put(54,33){\colorbox{gray}}
\put(54,37){\colorbox{gray}}
\put(54,42){\colorbox{gray}}

\put(60,33){\colorbox{gray}}
\put(60,37){\colorbox{gray}}
\put(60,42){\colorbox{gray}}
\put(64,33){\colorbox{gray}}
\put(64,37){\colorbox{gray}}
\put(64,42){\colorbox{gray}}
\put(69,33){\colorbox{gray}}
\put(69,37){\colorbox{gray}}
\put(69,42){\colorbox{gray}}

%
%
\put(30,18){\colorbox{gray}}
\put(30,22){\colorbox{gray}}
\put(30,27){\colorbox{gray}}
\put(34,18){\colorbox{gray}}
\put(34,22){\colorbox{gray}}
\put(34,27){\colorbox{gray}}
\put(39,18){\colorbox{gray}}
\put(39,22){\colorbox{gray}}
\put(39,27){\colorbox{gray}}

\put(45,18){\colorbox{gray}}
\put(45,22){\colorbox{gray}}
\put(45,27){\colorbox{gray}}
\put(49,18){\colorbox{gray}}
\put(49,22){\colorbox{gray}}
\put(49,27){\colorbox{gray}}
\put(54,18){\colorbox{gray}}
\put(54,22){\colorbox{gray}}
\put(54,27){\colorbox{gray}}

\put(60,18){\colorbox{gray}}
\put(60,22){\colorbox{gray}}
\put(60,27){\colorbox{gray}}
\put(64,18){\colorbox{gray}}
\put(64,22){\colorbox{gray}}
\put(64,27){\colorbox{gray}}
\put(69,18){\colorbox{gray}}
\put(69,22){\colorbox{gray}}
\put(69,27){\colorbox{gray}}

%
%
%
\put(45,3){\colorbox{gray}}
\put(45,7){\colorbox{gray}}
\put(45,12){\colorbox{gray}}
\put(49,3){\colorbox{gray}}
\put(49,7){\colorbox{gray}}
\put(49,12){\colorbox{gray}}
\put(54,3){\colorbox{gray}}
\put(54,7){\colorbox{gray}}
\put(54,12){\colorbox{gray}}

\put(60,3){\colorbox{gray}}
\put(60,7){\colorbox{gray}}
\put(60,12){\colorbox{gray}}
\put(64,3){\colorbox{gray}}
\put(64,7){\colorbox{gray}}
\put(64,12){\colorbox{gray}}
\put(69,3){\colorbox{gray}}
\put(69,7){\colorbox{gray}}
\put(69,12){\colorbox{gray}}

\put(0,0){\framebox(15,15)}
\put(15,0){\framebox(15,15)}
\put(30,0){\framebox(15,15)}
\put(45,0){\framebox(15,15)}
\put(60,0){\framebox(15,15)}
\put(0,15){\framebox(15,15)}
\put(15,15){\framebox(15,15)}
\put(30,15){\framebox(15,15)}
\put(45,15){\framebox(15,15)}
\put(60,15){\framebox(15,15)}
\put(0,30){\framebox(15,15)}
\put(15,30){\framebox(15,15)}
\put(30,30){\framebox(15,15)}
\put(45,30){\framebox(15,15)}
\put(60,30){\framebox(15,15)}
\put(0,45){\framebox(15,15)}
\put(15,45){\framebox(15,15)}
\put(30,45){\framebox(15,15)}
\put(45,45){\framebox(15,15)}
\put(60,45){\framebox(15,15)}
\put(0,60){\framebox(15,15)}
\put(15,60){\framebox(15,15)}
\put(30,60){\framebox(15,15)}
\put(45,60){\framebox(15,15)}
\put(60,60){\framebox(15,15)}
\end{picture}
\end{center}
\caption{the configuration of shaded boxes for $h=(3,3,4,5,5)$}
\label{picture:h=(3,3,4,5,5)}
\end{figure} 
\end{example}

An $(i,j)$-th box of a Hessenberg function is a {\bf corner} if there is neither a shaded box in $(i+1,j)$ nor in $(i,j-1)$. 
Let $h:[n] \to [n]$ be a Hessenberg function with $(i,j)$-th box as a corner with $i>j$.
We define a Hessenberg function $h':[n] \to [n]$ by removing $(i,j)$-th box of $h$ (see Figure~\ref{picture:h and h'}). 
More precisely, $h'$ is defined by
\begin{align*}
&h'(k)=h(k) \ \ \ \ \ \ \ \ \ \ \ \ \ \ \ \ \ \ \ \ {\rm if} \ k\neq j, \\
&h'(j)=h(j)-1=i-1.
\end{align*}

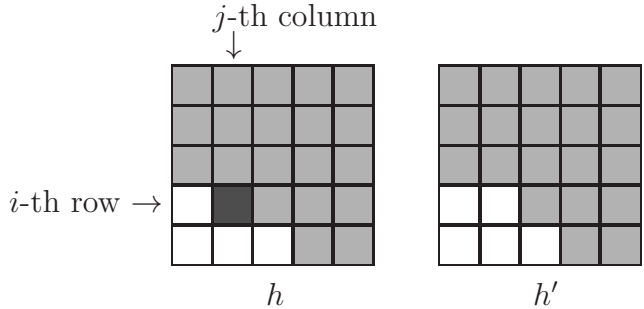
\begin{figure}[h]
\begin{center}
\begin{picture}(175,100)
\put(0,63){\colorbox{gray}}
\put(0,67){\colorbox{gray}}
\put(0,72){\colorbox{gray}}
\put(4,63){\colorbox{gray}}
\put(4,67){\colorbox{gray}}
\put(4,72){\colorbox{gray}}
\put(9,63){\colorbox{gray}}
\put(9,67){\colorbox{gray}}
\put(9,72){\colorbox{gray}}

\put(15,63){\colorbox{gray}}
\put(15,67){\colorbox{gray}}
\put(15,72){\colorbox{gray}}
\put(19,63){\colorbox{gray}}
\put(19,67){\colorbox{gray}}
\put(19,72){\colorbox{gray}}
\put(24,63){\colorbox{gray}}
\put(24,67){\colorbox{gray}}
\put(24,72){\colorbox{gray}}

\put(30,63){\colorbox{gray}}
\put(30,67){\colorbox{gray}}
\put(30,72){\colorbox{gray}}
\put(34,63){\colorbox{gray}}
\put(34,67){\colorbox{gray}}
\put(34,72){\colorbox{gray}}
\put(39,63){\colorbox{gray}}
\put(39,67){\colorbox{gray}}
\put(39,72){\colorbox{gray}}

\put(45,63){\colorbox{gray}}
\put(45,67){\colorbox{gray}}
\put(45,72){\colorbox{gray}}
\put(49,63){\colorbox{gray}}
\put(49,67){\colorbox{gray}}
\put(49,72){\colorbox{gray}}
\put(54,63){\colorbox{gray}}
\put(54,67){\colorbox{gray}}
\put(54,72){\colorbox{gray}}

\put(60,63){\colorbox{gray}}
\put(60,67){\colorbox{gray}}
\put(60,72){\colorbox{gray}}
\put(64,63){\colorbox{gray}}
\put(64,67){\colorbox{gray}}
\put(64,72){\colorbox{gray}}
\put(69,63){\colorbox{gray}}
\put(69,67){\colorbox{gray}}
\put(69,72){\colorbox{gray}}

\put(0,48){\colorbox{gray}}
\put(0,52){\colorbox{gray}}
\put(0,57){\colorbox{gray}}
\put(4,48){\colorbox{gray}}
\put(4,52){\colorbox{gray}}
\put(4,57){\colorbox{gray}}
\put(9,48){\colorbox{gray}}
\put(9,52){\colorbox{gray}}
\put(9,57){\colorbox{gray}}

\put(15,48){\colorbox{gray}}
\put(15,52){\colorbox{gray}}
\put(15,57){\colorbox{gray}}
\put(19,48){\colorbox{gray}}
\put(19,52){\colorbox{gray}}
\put(19,57){\colorbox{gray}}
\put(24,48){\colorbox{gray}}
\put(24,52){\colorbox{gray}}
\put(24,57){\colorbox{gray}}

\put(30,48){\colorbox{gray}}
\put(30,52){\colorbox{gray}}
\put(30,57){\colorbox{gray}}
\put(34,48){\colorbox{gray}}
\put(34,52){\colorbox{gray}}
\put(34,57){\colorbox{gray}}
\put(39,48){\colorbox{gray}}
\put(39,52){\colorbox{gray}}
\put(39,57){\colorbox{gray}}

\put(45,48){\colorbox{gray}}
\put(45,52){\colorbox{gray}}
\put(45,57){\colorbox{gray}}
\put(49,48){\colorbox{gray}}
\put(49,52){\colorbox{gray}}
\put(49,57){\colorbox{gray}}
\put(54,48){\colorbox{gray}}
\put(54,52){\colorbox{gray}}
\put(54,57){\colorbox{gray}}

\put(60,48){\colorbox{gray}}
\put(60,52){\colorbox{gray}}
\put(60,57){\colorbox{gray}}
\put(64,48){\colorbox{gray}}
\put(64,52){\colorbox{gray}}
\put(64,57){\colorbox{gray}}
\put(69,48){\colorbox{gray}}
\put(69,52){\colorbox{gray}}
\put(69,57){\colorbox{gray}}

\put(0,33){\colorbox{gray}}
\put(0,37){\colorbox{gray}}
\put(0,42){\colorbox{gray}}
\put(4,33){\colorbox{gray}}
\put(4,37){\colorbox{gray}}
\put(4,42){\colorbox{gray}}
\put(9,33){\colorbox{gray}}
\put(9,37){\colorbox{gray}}
\put(9,42){\colorbox{gray}}

\put(15,33){\colorbox{gray}}
\put(15,37){\colorbox{gray}}
\put(15,42){\colorbox{gray}}
\put(19,33){\colorbox{gray}}
\put(19,37){\colorbox{gray}}
\put(19,42){\colorbox{gray}}
\put(24,33){\colorbox{gray}}
\put(24,37){\colorbox{gray}}
\put(24,42){\colorbox{gray}}

\put(30,33){\colorbox{gray}}
\put(30,37){\colorbox{gray}}
\put(30,42){\colorbox{gray}}
\put(34,33){\colorbox{gray}}
\put(34,37){\colorbox{gray}}
\put(34,42){\colorbox{gray}}
\put(39,33){\colorbox{gray}}
\put(39,37){\colorbox{gray}}
\put(39,42){\colorbox{gray}}

\put(45,33){\colorbox{gray}}
\put(45,37){\colorbox{gray}}
\put(45,42){\colorbox{gray}}
\put(49,33){\colorbox{gray}}
\put(49,37){\colorbox{gray}}
\put(49,42){\colorbox{gray}}
\put(54,33){\colorbox{gray}}
\put(54,37){\colorbox{gray}}
\put(54,42){\colorbox{gray}}

\put(60,33){\colorbox{gray}}
\put(60,37){\colorbox{gray}}
\put(60,42){\colorbox{gray}}
\put(64,33){\colorbox{gray}}
\put(64,37){\colorbox{gray}}
\put(64,42){\colorbox{gray}}
\put(69,33){\colorbox{gray}}
\put(69,37){\colorbox{gray}}
\put(69,42){\colorbox{gray}}

%
\put(15,18){\colorbox{Gray}}
\put(15,22){\colorbox{Gray}}
\put(15,27){\colorbox{Gray}}
\put(19,18){\colorbox{Gray}}
\put(19,22){\colorbox{Gray}}
\put(19,27){\colorbox{Gray}}
\put(24,18){\colorbox{Gray}}
\put(24,22){\colorbox{Gray}}
\put(24,27){\colorbox{Gray}}

\put(30,18){\colorbox{gray}}
\put(30,22){\colorbox{gray}}
\put(30,27){\colorbox{gray}}
\put(34,18){\colorbox{gray}}
\put(34,22){\colorbox{gray}}
\put(34,27){\colorbox{gray}}
\put(39,18){\colorbox{gray}}
\put(39,22){\colorbox{gray}}
\put(39,27){\colorbox{gray}}

\put(45,18){\colorbox{gray}}
\put(45,22){\colorbox{gray}}
\put(45,27){\colorbox{gray}}
\put(49,18){\colorbox{gray}}
\put(49,22){\colorbox{gray}}
\put(49,27){\colorbox{gray}}
\put(54,18){\colorbox{gray}}
\put(54,22){\colorbox{gray}}
\put(54,27){\colorbox{gray}}

\put(60,18){\colorbox{gray}}
\put(60,22){\colorbox{gray}}
\put(60,27){\colorbox{gray}}
\put(64,18){\colorbox{gray}}
\put(64,22){\colorbox{gray}}
\put(64,27){\colorbox{gray}}
\put(69,18){\colorbox{gray}}
\put(69,22){\colorbox{gray}}
\put(69,27){\colorbox{gray}}

%
%
%
\put(45,3){\colorbox{gray}}
\put(45,7){\colorbox{gray}}
\put(45,12){\colorbox{gray}}
\put(49,3){\colorbox{gray}}
\put(49,7){\colorbox{gray}}
\put(49,12){\colorbox{gray}}
\put(54,3){\colorbox{gray}}
\put(54,7){\colorbox{gray}}
\put(54,12){\colorbox{gray}}

\put(60,3){\colorbox{gray}}
\put(60,7){\colorbox{gray}}
\put(60,12){\colorbox{gray}}
\put(64,3){\colorbox{gray}}
\put(64,7){\colorbox{gray}}
\put(64,12){\colorbox{gray}}
\put(69,3){\colorbox{gray}}
\put(69,7){\colorbox{gray}}
\put(69,12){\colorbox{gray}}

\put(0,0){\framebox(15,15)}
\put(15,0){\framebox(15,15)}
\put(30,0){\framebox(15,15)}
\put(45,0){\framebox(15,15)}
\put(60,0){\framebox(15,15)}
\put(0,15){\framebox(15,15)}
\put(15,15){\framebox(15,15)}
\put(30,15){\framebox(15,15)}
\put(45,15){\framebox(15,15)}
\put(60,15){\framebox(15,15)}
\put(0,30){\framebox(15,15)}
\put(15,30){\framebox(15,15)}
\put(30,30){\framebox(15,15)}
\put(45,30){\framebox(15,15)}
\put(60,30){\framebox(15,15)}
\put(0,45){\framebox(15,15)}
\put(15,45){\framebox(15,15)}
\put(30,45){\framebox(15,15)}
\put(45,45){\framebox(15,15)}
\put(60,45){\framebox(15,15)}
\put(0,60){\framebox(15,15)}
\put(15,60){\framebox(15,15)}
\put(30,60){\framebox(15,15)}
\put(45,60){\framebox(15,15)}
\put(60,60){\framebox(15,15)}

\put(100,63){\colorbox{gray}}
\put(100,67){\colorbox{gray}}
\put(100,72){\colorbox{gray}}
\put(104,63){\colorbox{gray}}
\put(104,67){\colorbox{gray}}
\put(104,72){\colorbox{gray}}
\put(109,63){\colorbox{gray}}
\put(109,67){\colorbox{gray}}
\put(109,72){\colorbox{gray}}

\put(115,63){\colorbox{gray}}
\put(115,67){\colorbox{gray}}
\put(115,72){\colorbox{gray}}
\put(119,63){\colorbox{gray}}
\put(119,67){\colorbox{gray}}
\put(119,72){\colorbox{gray}}
\put(124,63){\colorbox{gray}}
\put(124,67){\colorbox{gray}}
\put(124,72){\colorbox{gray}}

\put(130,63){\colorbox{gray}}
\put(130,67){\colorbox{gray}}
\put(130,72){\colorbox{gray}}
\put(134,63){\colorbox{gray}}
\put(134,67){\colorbox{gray}}
\put(134,72){\colorbox{gray}}
\put(139,63){\colorbox{gray}}
\put(139,67){\colorbox{gray}}
\put(139,72){\colorbox{gray}}

\put(145,63){\colorbox{gray}}
\put(145,67){\colorbox{gray}}
\put(145,72){\colorbox{gray}}
\put(149,63){\colorbox{gray}}
\put(149,67){\colorbox{gray}}
\put(149,72){\colorbox{gray}}
\put(154,63){\colorbox{gray}}
\put(154,67){\colorbox{gray}}
\put(154,72){\colorbox{gray}}

\put(160,63){\colorbox{gray}}
\put(160,67){\colorbox{gray}}
\put(160,72){\colorbox{gray}}
\put(164,63){\colorbox{gray}}
\put(164,67){\colorbox{gray}}
\put(164,72){\colorbox{gray}}
\put(169,63){\colorbox{gray}}
\put(169,67){\colorbox{gray}}
\put(169,72){\colorbox{gray}}

\put(100,48){\colorbox{gray}}
\put(100,52){\colorbox{gray}}
\put(100,57){\colorbox{gray}}
\put(104,48){\colorbox{gray}}
\put(104,52){\colorbox{gray}}
\put(104,57){\colorbox{gray}}
\put(109,48){\colorbox{gray}}
\put(109,52){\colorbox{gray}}
\put(109,57){\colorbox{gray}}

\put(115,48){\colorbox{gray}}
\put(115,52){\colorbox{gray}}
\put(115,57){\colorbox{gray}}
\put(119,48){\colorbox{gray}}
\put(119,52){\colorbox{gray}}
\put(119,57){\colorbox{gray}}
\put(124,48){\colorbox{gray}}
\put(124,52){\colorbox{gray}}
\put(124,57){\colorbox{gray}}

\put(130,48){\colorbox{gray}}
\put(130,52){\colorbox{gray}}
\put(130,57){\colorbox{gray}}
\put(134,48){\colorbox{gray}}
\put(134,52){\colorbox{gray}}
\put(134,57){\colorbox{gray}}
\put(139,48){\colorbox{gray}}
\put(139,52){\colorbox{gray}}
\put(139,57){\colorbox{gray}}

\put(145,48){\colorbox{gray}}
\put(145,52){\colorbox{gray}}
\put(145,57){\colorbox{gray}}
\put(149,48){\colorbox{gray}}
\put(149,52){\colorbox{gray}}
\put(149,57){\colorbox{gray}}
\put(154,48){\colorbox{gray}}
\put(154,52){\colorbox{gray}}
\put(154,57){\colorbox{gray}}

\put(160,48){\colorbox{gray}}
\put(160,52){\colorbox{gray}}
\put(160,57){\colorbox{gray}}
\put(164,48){\colorbox{gray}}
\put(164,52){\colorbox{gray}}
\put(164,57){\colorbox{gray}}
\put(169,48){\colorbox{gray}}
\put(169,52){\colorbox{gray}}
\put(169,57){\colorbox{gray}}

\put(100,33){\colorbox{gray}}
\put(100,37){\colorbox{gray}}
\put(100,42){\colorbox{gray}}
\put(104,33){\colorbox{gray}}
\put(104,37){\colorbox{gray}}
\put(104,42){\colorbox{gray}}
\put(109,33){\colorbox{gray}}
\put(109,37){\colorbox{gray}}
\put(109,42){\colorbox{gray}}

\put(115,33){\colorbox{gray}}
\put(115,37){\colorbox{gray}}
\put(115,42){\colorbox{gray}}
\put(119,33){\colorbox{gray}}
\put(119,37){\colorbox{gray}}
\put(119,42){\colorbox{gray}}
\put(124,33){\colorbox{gray}}
\put(124,37){\colorbox{gray}}
\put(124,42){\colorbox{gray}}

\put(130,33){\colorbox{gray}}
\put(130,37){\colorbox{gray}}
\put(130,42){\colorbox{gray}}
\put(134,33){\colorbox{gray}}
\put(134,37){\colorbox{gray}}
\put(134,42){\colorbox{gray}}
\put(139,33){\colorbox{gray}}
\put(139,37){\colorbox{gray}}
\put(139,42){\colorbox{gray}}

\put(145,33){\colorbox{gray}}
\put(145,37){\colorbox{gray}}
\put(145,42){\colorbox{gray}}
\put(149,33){\colorbox{gray}}
\put(149,37){\colorbox{gray}}
\put(149,42){\colorbox{gray}}
\put(154,33){\colorbox{gray}}
\put(154,37){\colorbox{gray}}
\put(154,42){\colorbox{gray}}

\put(160,33){\colorbox{gray}}
\put(160,37){\colorbox{gray}}
\put(160,42){\colorbox{gray}}
\put(164,33){\colorbox{gray}}
\put(164,37){\colorbox{gray}}
\put(164,42){\colorbox{gray}}
\put(169,33){\colorbox{gray}}
\put(169,37){\colorbox{gray}}
\put(169,42){\colorbox{gray}}

%

\put(130,18){\colorbox{gray}}
\put(130,22){\colorbox{gray}}
\put(130,27){\colorbox{gray}}
\put(134,18){\colorbox{gray}}
\put(134,22){\colorbox{gray}}
\put(134,27){\colorbox{gray}}
\put(139,18){\colorbox{gray}}
\put(139,22){\colorbox{gray}}
\put(139,27){\colorbox{gray}}

\put(145,18){\colorbox{gray}}
\put(145,22){\colorbox{gray}}
\put(145,27){\colorbox{gray}}
\put(149,18){\colorbox{gray}}
\put(149,22){\colorbox{gray}}
\put(149,27){\colorbox{gray}}
\put(154,18){\colorbox{gray}}
\put(154,22){\colorbox{gray}}
\put(154,27){\colorbox{gray}}

\put(160,18){\colorbox{gray}}
\put(160,22){\colorbox{gray}}
\put(160,27){\colorbox{gray}}
\put(164,18){\colorbox{gray}}
\put(164,22){\colorbox{gray}}
\put(164,27){\colorbox{gray}}
\put(169,18){\colorbox{gray}}
\put(169,22){\colorbox{gray}}
\put(169,27){\colorbox{gray}}

%
%
%
\put(145,3){\colorbox{gray}}
\put(145,7){\colorbox{gray}}
\put(145,12){\colorbox{gray}}
\put(149,3){\colorbox{gray}}
\put(149,7){\colorbox{gray}}
\put(149,12){\colorbox{gray}}
\put(154,3){\colorbox{gray}}
\put(154,7){\colorbox{gray}}
\put(154,12){\colorbox{gray}}

\put(160,3){\colorbox{gray}}
\put(160,7){\colorbox{gray}}
\put(160,12){\colorbox{gray}}
\put(164,3){\colorbox{gray}}
\put(164,7){\colorbox{gray}}
\put(164,12){\colorbox{gray}}
\put(169,3){\colorbox{gray}}
\put(169,7){\colorbox{gray}}
\put(169,12){\colorbox{gray}}

\put(100,0){\framebox(15,15)}
\put(115,0){\framebox(15,15)}
\put(130,0){\framebox(15,15)}
\put(145,0){\framebox(15,15)}
\put(160,0){\framebox(15,15)}
\put(100,15){\framebox(15,15)}
\put(115,15){\framebox(15,15)}
\put(130,15){\framebox(15,15)}
\put(145,15){\framebox(15,15)}
\put(160,15){\framebox(15,15)}
\put(100,30){\framebox(15,15)}
\put(115,30){\framebox(15,15)}
\put(130,30){\framebox(15,15)}
\put(145,30){\framebox(15,15)}
\put(160,30){\framebox(15,15)}
\put(100,45){\framebox(15,15)}
\put(115,45){\framebox(15,15)}
\put(130,45){\framebox(15,15)}
\put(145,45){\framebox(15,15)}
\put(160,45){\framebox(15,15)}
\put(100,60){\framebox(15,15)}
\put(115,60){\framebox(15,15)}
\put(130,60){\framebox(15,15)}
\put(145,60){\framebox(15,15)}
\put(160,60){\framebox(15,15)}

\put(-65,20){ $i$-th row $\rightarrow$}
\put(20,80){$\downarrow$}
\put(15,90){$j$-th column}

\put(35,-15){$h$}
\put(135,-15){$h'$}
\end{picture}
\end{center}
\vspace{5pt}
\caption{the pictures of $h$ and $h'$}
\label{picture:h and h'}
\end{figure}
\noindent
Then, we have $\Hess(N,h') \subset \Hess(N,h)$ by the definition \eqref{eq:DefHess(X,h)}. 
From the isomorphism \eqref{eq:AHHM} we obtain 
\begin{align*}
f_{i-1,j} \neq 0 \ \ {\rm in} \ H^*(\Hess(N,h)) \ \ \ {\rm and } \ \ \ f_{i-1,j}=0 \ \ {\rm in} \ H^*(\Hess(N,h')). 
\end{align*}
In fact, suppose for a contradiction that $f_{i-1,j} = 0$ in $H^*(\Hess(N,h))$.
Then the ideal $(f_{h(1),1},\ldots,f_{h(n),n})$ is equal to the ideal $(f_{h'(1),1},\ldots,f_{h'(n),n})$. 
It follows from \eqref{eq:AHHM} that $H^*(\Hess(N,h))$ is isomorphic to $H^*(\Hess(N,h'))$.
This contradicts the equality $\dim \Hess(N,h)=\sum_{j=1}^n (h(j)-j)=\dim \Hess(N,h')+1$.

Next, we consider intersections of a regular nilpotent Hessenberg variety and Schubert cells.
We first recall the definition of Schubert cells.
Let $G$ be the general linear group $\mbox{GL}(n, \C)$ and $B$ the standard Borel subgroup of upper-triangular invertible matrices. 
Then the flag variety $\mathcal{F}\ell(\C^n)$ can be realized as a homogeneous space $G/B$.
For a permutation $w$ in the symmetric group $S_n$, 
we define the {\bf Schubert cell} $X_w^{\circ}$ of the flag variety by $X_w^{\circ} = BwB/B$.
The Schubert cell $X_w^{\circ}$ is isomorphic to an affine space $\C^{\ell(w)}$.
It follows from \cite[Theorem~6.1]{Ty1} that the condition for $\Hess(N,h) \cap X_w^{\circ}$ being nonempty is given by 
\begin{equation} \label{eq:Ty}
\Hess(N,h) \cap X_w^{\circ} \neq \emptyset \iff w^{-1}(w(r)-1) \leq h(r) \ {\rm for \ all} \ r\in[n].
\end{equation}
The following lemma gives the geometric meaning of the permutations $w_k^{(i,j)}$ in \eqref{eq:w_k^{(i,j)}}.

\begin{lemma} \label{lemma:w_k^{(i,j)}}
Let $h:[n] \to [n]$ be a Hessenberg function with $(i,j)$-th box as a corner with $i>j$ and $h':[n] \to [n]$ a Hessenberg function obtained from $h$ by removing $(i,j)$-th box. 
Let $\{X_w^{\circ} \}$ be Schubert cells. 
Then, a permuation $w$ in $S_n$ satisfying 
\begin{align*}
\Hess(N,h) \cap X_w^{\circ} \neq \emptyset \ \ \ {\rm and} \ \ \ \Hess(N,h') \cap X_w^{\circ} = \emptyset
\end{align*}
with minimal length is given by $w_k^{(i,j)}$ in \eqref{eq:w_k^{(i,j)}} for $1 \leq k \leq i-j$.
\end{lemma}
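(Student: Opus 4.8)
The plan is to convert the two geometric conditions on $w$ into combinatorial conditions on its one-line notation via Tymoczko's criterion \eqref{eq:Ty}, and then to pin down the minimal-length solutions by a direct inversion count. First I would use that $(i,j)$ is a corner of $h$ with $i>j$, which forces $h(j)=i$, while $h'$ agrees with $h$ except at $j$, where $h'(j)=i-1$. By \eqref{eq:Ty}, $\Hess(N,h)\cap X_w^{\circ}\neq\emptyset$ is equivalent to the condition (A) that $w^{-1}(w(r)-1)\leq h(r)$ for every $r\in[n]$ (with the usual convention that this is vacuous when $w(r)=1$). Assuming (A), the criterion for $h'$ can fail only at $r=j$, since $h$ and $h'$ agree at every other index; hence $\Hess(N,h')\cap X_w^{\circ}=\emptyset$ is then equivalent to the condition (B) that $w^{-1}(w(j)-1)=i$, and conversely (B) forces (A) to hold at $r=j$ (because $h(j)=i$) and makes the $h'$-criterion fail there. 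Thus the lemma reduces to the combinatorial statement that the permutations of minimal length satisfying both (A) and (B) are precisely $w_1^{(i,j)},\dots,w_{i-j}^{(i,j)}$.

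Next I would record the one-line notation of $w_k^{(i,j)}$, read directly from \eqref{eq:w_k^{(i,j)}}: it fixes $1,\dots,j-1$ and $i+1,\dots,n$, sends $j\mapsto i-k+1$ and $i\mapsto i-k$, sends $c\mapsto c-1$ for $j<c\leq i-k$, and sends $c\mapsto c+1$ for $i-k<c<i$. From this description one checks immediately that (B) holds, because the value $w_k^{(i,j)}(j)-1=i-k$ occupies position $i$; that (A) holds for every Hessenberg function with $h(j)\geq i$, by a short case check using only $h(r)\geq r$ at the remaining indices (and $h(j)\geq i$ holds here since $(i,j)$ is a box of $h$); and that $\ell(w_k^{(i,j)})=i-j$, by counting the inversions of $w_k^{(i,j)}$, which all lie among the positions $j,j+1,\dots,i$. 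The permutations $w_k^{(i,j)}$ are pairwise distinct, as they take distinct values at position $j$.

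Finally I would prove minimality and identify all minimizers. Let $w\in S_n$ satisfy (B) and set $v:=w(j)$, so that $w(i)=v-1$ and $v\geq 2$. Putting $A:=\{b : j<b\leq n,\ w(b)<v\}$ and $B:=\{a : 1\leq a<i,\ w(a)\geq v\}$, every $(j,b)$ with $b\in A$ and every $(a,i)$ with $a\in B$ is an inversion of $w$, and $(j,i)$ is the only pair lying in both families, so $\ell(w)\geq|A|+|B|-1$. Since each of the $i-1-j$ positions strictly between $j$ and $i$ lies in exactly one of $A$, $B$, while $i\in A$ and $j\in B$, we obtain $|A|+|B|\geq i-j+1$, hence $\ell(w)\geq i-j$; together with the previous paragraph this gives minimality. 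If moreover $\ell(w)=i-j$, then every inversion of $w$ is of the form $(j,b)$ or $(a,i)$, so $w$ is increasing on $[n]\setminus\{j,i\}$, and such a permutation is determined by the pair $(w(j),w(i))=(v,v-1)$, hence by $v$ alone. Since $\ell(w)=i-j$, the value $v$ must lie in $\{j+1,\dots,i\}$ (for $v$ outside this range the resulting permutation has length $>i-j$), and comparison with the one-line notation above then gives $w=w_{i-v+1}^{(i,j)}$; as $v$ runs over $\{j+1,\dots,i\}$ the index $i-v+1$ runs over $\{1,\dots,i-j\}$, so the minimizers are exactly $w_1^{(i,j)},\dots,w_{i-j}^{(i,j)}$. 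The step I expect to be most delicate is this last one: showing that being increasing on $[n]\setminus\{j,i\}$, together with (B) and length $i-j$, leaves no room beyond the family $\{w_k^{(i,j)}\}$ requires a careful bookkeeping of which values occupy which positions; one must also be mildly careful with the boundary convention in \eqref{eq:Ty} when $w(r)=1$.
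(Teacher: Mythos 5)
Your proof is correct and takes essentially the same route as the paper: reduce via Tymoczko's criterion \eqref{eq:Ty} to the conditions $w(i)=w(j)-1$ and $w^{-1}(w(r)-1)\le h(r)$ for $r\ne j$, check that the $w_k^{(i,j)}$ satisfy them with $\ell(w_k^{(i,j)})=i-j$, and show that any solution of length $i-j$ must be one of them. Your double-counting of the inversions $(j,b)$ and $(a,i)$, giving the unconditional bound $\ell(w)\ge i-j$ for every $w$ with $w(i)=w(j)-1$ and then classifying the equality case, is just a more explicit justification of the paper's step ``minimality forces the values at positions other than $j,i$ to appear in increasing order,'' so the two arguments coincide in substance.
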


\begin{proof}
%
Let $X_w^{\circ}$ be a Schubert cell. 
It follows from \eqref{eq:Ty} that a necessary and sufficient condition for $\Hess(N,h) \cap X_w^{\circ} \neq \emptyset$ and $\Hess(N,h') \cap X_w^{\circ} = \emptyset$ is given by 
$i-1=h'(j) < w^{-1}(w(j)-1) \leq h(j)=i$ and $w^{-1}(w(r)-1) \leq h(r)$ for $r\neq j$, that is, 
\begin{align} 
w(j)-1&=w(i), \label{eq:w(j)-1=w(i)} \\
w^{-1}(w(r)-1)& \leq h(r) \ \ \ {\rm for} \ r\neq j \label{eq:w^{-1}(w(r)-1)}. 
\end{align}
It is clear that $w_k^{(i,j)}$ satisfies \eqref{eq:w(j)-1=w(i)} and \eqref{eq:w^{-1}(w(r)-1)}.
Let $v$ be a permutation in $S_n$ satisfying \eqref{eq:w(j)-1=w(i)} and \eqref{eq:w^{-1}(w(r)-1)} with minimal length, and we prove that $v$ is a permutation $w_k^{(i,j)}$ for some $1\leq k \leq i-j$.
From the minimality of the number of inversions of $v$, we must arrange the values $v(r)$ for $r\neq j,i$ in one-line notation as a subsequence in the increasing order. 
If $v(j)=m+1, v(i)=m$ for some $m$ with $1\leq m \leq j-1$ or $i\leq m \leq n-1$, then $\ell(v)>\ell(w_k^{(i,j)})=i-j$.
This contradicts the minimality for the length of $v$.
Hence, we have $v(j)=i-k+1, v(i)=i-k$ for some $k$ with $1\leq k \leq i-j$.
This means that $v=w_k^{(i,j)}$.
\end{proof}

In summary, we can observe a geometric meaning of Theorem~\ref{theorem:main1} as follows. 
Let $\Hess(N,h)$ be a regular nilpotent Hessenberg variety.
By removing an $(i,j)$-th box from $h$, we obtain the new Hessenberg variety $\Hess(N,h')$.
Then, Lemma~\ref{lemma:w_k^{(i,j)}} tells us that $X_{w_{k}^{(i,j)}}^{\circ} \ (1 \leq k \leq i-j)$ are the minimal dimensional Schubert cells which do not intersect with the new Hessenberg variety $\Hess(N,h')$. 
Theorem~\ref{theorem:main1} now says that an alternating sum of Schubert classes $\sigma_{w_{k}^{(i,j)}}$ vanishes in $H^*(\Hess(N,h'))$ as the new relation which we do not have in $H^*(\Hess(N, h))$.

\bigskip

\noindent
\textbf{Acknowledgements}. 
The author is grateful to Hiraku Abe for fruitful discussions and comments on this paper.
The author learned the equality \eqref{eq:AbeDDO} from him. 
The author also appreciates Mikiya Masuda for his support and valuable comments on this paper.
The author is partially supported by JSPS Grant-in-Aid for JSPS Fellows: 17J04330.


\begin{thebibliography}{24}
%
%
%

\bibitem{AHHM}
H. Abe, M. Harada, T. Horiguchi, and M. Masuda, 
The cohomology rings of regular nilpotent Hessenberg varieties in Lie type A. 
\textit{Int. Math. Res. Not. IMRN}., DOI: http;//doi.org/10.1093/imrn/rnx275.

%

\bibitem{AHMMS}
T. Abe, T. Horiguchi, M. Masuda, S. Murai, and T. Sato,
Hessenberg varieties and hyperplane arrangements,
arXiv:1611.00269. 

%

\bibitem{BrionCarrell}
M. Brion and J. B. Carrell, 
The equivariant cohomology ring of regular varieties, 
\textit{Michigan Math. J.}  \textbf{52}  (2004), 189--203.

\bibitem{BC}
P. Brosnan and T. Y. Chow,
Unit interval orders and the dot action on the cohomology of regular semisimple Hessenberg varieties,
arXiv:1511.00773. 

%
%

\bibitem{dMPS}
F. De Mari, C. Procesi, and M. A. Shayman, Hessenberg varieties, \textit{Trans. Amer. Math. Soc.} \textbf{332} (1992), no. 2, 529--534.

\bibitem{dMS}
F. De Mari and M. A. Shayman, Generalized Eulerian numbers and the topology of the Hessenberg variety of a matrix, 
\textit{Acta Appl. Math.} \textbf{12} (1988), 213--235. 

\bibitem{Dre0}
E. Drellich,  
Monk's rule and Giambelli's formula for Peterson varieties of all Lie types, 
\textit{J. Algebraic Combin.} \textbf{41} (2015), 539--575. 

%
%
%

\bibitem{Fult}
W. Fulton, 
\textit{Young tableaux}, 
With applications to representation theory and geometry, London Mathematical Society Student Texts, 35, Cambridge University Press, Cambridge, 1997.
%
%
\bibitem{Guay}
M. Guay-Paquet,
A second proof of the Shareshian--Wachs conjecture, by way of a new Hopf algebra,
arXiv:1601.05498.

\bibitem{HHM}
M. Harada, T. Horiguchi, and M. Masuda, 
The equivariant cohomology rings of Peterson varieties in all Lie types, 
\textit{Canad. Math. Bull.} \textbf{58} (2015), 80--90. 


\bibitem{HT1}
M. Harada, and J. S. Tymoczko, 
A positive Monk formula in the $S^1$-equivariant cohomology of type A Peterson varieties. 
\textit{Proc. Lond. Math. Soc.} (3) \textbf{103} (2011), 40--72.

%
%
%
%

\bibitem{IY}
E. Insko and A. Yong,  
Patch ideals and Peterson varieties, 
\textit{Transform. Groups} \textbf{17} (2012), 1011--1036.

%
%
\bibitem{Ko}
B. Kostant, Flag manifold quantum cohomology, the Toda lattice, and the representation with highest weight $\rho$. 
\textit{Selecta Math. (N.S.)} \textbf{2} (1996), 43--91. 


\bibitem{Monk}
D. Monk, The geometry of flag manifolds. 
\textit{Proc. London Math. Soc.} \textbf{9} (1959) 253--286. 

\bibitem{Pr1}
M. Precup,
Affine pavings of Hessenberg varieties for semisimple groups,
\textit{Selecta Math. (N.S.)} \textbf{19} (2013), 
no. 4, 903--922.

%
%
%
%
\bibitem{R}
K. Rietsch, Totally positive Toeplitz matrices and quantum cohomology of partial flag varieties. 
\textit{J. Amer. Math. Soc.} \textbf{16} (2003), 363--392.

\bibitem{SW}
J. Shareshian and M. L. Wachs, Chromatic quasisymmetric
  functions, \textit{Adv. Math.} \textbf{295} (2016), 497--551.

\bibitem{STy}
E. Sommers and J. S. Tymoczko, 
Exponents of $B$-stable ideals, \textit{Trans. Amer. Math. Soc}. 
\textbf{358} (2006), 3493--3509.

\bibitem{Spr1}
T. A. Springer, 
Trigonometric sums, Green functions of finite groups and representations of Weyl
groups, 
\textit{Invent. Math.} \textbf{36} (1976), 173--207.

\bibitem{Spr2}
T. A. Springer, 
A construction of representations of Weyl groups, 
\textit{Invent. Math.} \textbf{44} (1978), 279--293.

%
%

\bibitem{Ty1}
J. S. Tymoczko, 
Linear conditions imposed on flag varieties, \textit{Amer. J. Math.} \textbf{128} (2006), 1587--1604.

\bibitem{Ty2}
J. S. Tymoczko, 
Paving Hessenberg varieties by affines, \textit{Selecta Math. (N.S.)} \textbf{13} (2007), 
353--367.  

%
%
%

\end{thebibliography}
\end{document}